\documentclass[11pt]{amsart}

\usepackage{amsmath}
\usepackage{fullpage}
\usepackage{xspace}
\usepackage[psamsfonts]{amssymb}
\usepackage[latin1]{inputenc}
\usepackage{graphicx,color}
\usepackage[curve]{xypic} 
\usepackage{hyperref}
\usepackage{graphicx}
\usepackage{amsmath}%
\usepackage{amsthm}%
\usepackage{amscd}
\usepackage{amsfonts}%
\usepackage{amssymb}%
\usepackage{graphicx}

\usepackage{mathrsfs}

\usepackage{tikz}
\usetikzlibrary{matrix,arrows}

\usepackage{tikz-cd}

\newtheorem{theorem}{Theorem}[section]

\newtheorem{conjecture}[theorem]{Conjecture}
\newtheorem{question}[theorem]{Question}
\newtheorem{corollary}[theorem]{Corollary}

\theoremstyle{remark}

\numberwithin{equation}{section}

\newcommand{\Lcal}{\mathscr{L}}

\newcommand{\Pro}{\mathbb{P}}

\newcommand{\Q}{\mathbb{Q}}

\newcommand{\G}{\mathbb{G}}

\newcommand{\rk}{\mathrm{rank}\,}

\makeatletter
\@namedef{subjclassname@2020}{%
  \textup{2020} Mathematics Subject Classification}
\makeatother

  \DeclareFontFamily{U}{wncy}{}
    \DeclareFontShape{U}{wncy}{m}{n}{<->wncyr10}{}
    \DeclareSymbolFont{mcy}{U}{wncy}{m}{n}
    \DeclareMathSymbol{\Sha}{\mathord}{mcy}{"58}

\begin{document}
\title[A note on Bremner's conjecture and uniformity]{A note on Bremner's conjecture and uniformity}

\author{Natalia Garcia-Fritz}
\address{ Departamento de Matem\'aticas,
Pontificia Universidad Cat\'olica de Chile.
Facultad de Matem\'aticas,
4860 Av.\ Vicu\~na Mackenna,
Macul, RM, Chile}
\email[N. Garcia-Fritz]{natalia.garcia@uc.cl}%

\author{Hector Pasten}
\address{ Departamento de Matem\'aticas,
Pontificia Universidad Cat\'olica de Chile.
Facultad de Matem\'aticas,
4860 Av.\ Vicu\~na Mackenna,
Macul, RM, Chile}
\email[H. Pasten]{hector.pasten@uc.cl}%

%\thanks{}
\thanks{N.G.-F. was supported by ANID Fondecyt Regular grant 1251300 from Chile. H.P. was supported by ANID Fondecyt Regular grant 1230507 from Chile.}
\date{\today}
\subjclass[2020]{Primary: 11G05; Secondary: 11B25, 14G05} %
\keywords{Bremner's conjecture, elliptic curves, arithmetic progressions, multiplicative groups, ranks, uniformity}%
%\dedicatory{}

\begin{abstract} In 1998, Bremner conjectured that elliptic curves over the rationals having long sequences of distinct rational points whose $x$-coordinates are in arithmetic progression, have large rank. This was proved some years ago in a strong form as a consequence of previous work by the authors, by a combination of Nevanlinna theory and the uniform Mordell--Lang theorem of Gao--Ge--K\"uhne. Thus, if the ranks of elliptic curves over the rationals are uniformly bounded, then so are the lengths of the aforementioned arithmetic progressions. In this note we give a much more direct proof of this last statement, using the height-uniform Mordell theorem of Dimitrov--Gao--Habegger. The method is flexible and we give a new application of these ideas to $x$-coordinates in finitely generated multiplicative groups and geometric progressions; connections to a possible semiabelian uniform Mordell--Lang are also discussed.
\end{abstract}

\maketitle

%\tableofcontents

%%%%%%%%%%%%%%%%%%%%%%%%%%%%%%%%%%%%%%
%%%%%%%%%%%%%%%%%%%%%%%%%%%%%%%%%%%%%%
%%%%%%%%%%%%%%%%%%%%%%%%%%%%%%%%%%%%%%
%%%%%%%%%%%%%%%%%%%%%%%%%%%%%%%%%%%%%%
%%%%%%%%%%%%%%%%%%%%%%%%%%%%%%%%%%%%%%
%%%%%%%%%%%%%%%%%%%%%%%%%%%%%%%%%%%%%%

\section{Introduction}

\subsection{Bremner's rank conjecture} For an elliptic curve $E$ over $\Q$, an \emph{arithmetic progression of length $M$} is a sequence of points $P_1,...,P_M$ in $E(\Q)$ whose $x$-coordinates with respect to one (equivalently, all) Weierstrass equation $y^2=f(x)$ with $f$ cubic, form a non-trivial arithmetic progression in $\Q$.

In \cite{Bremner} Bremner conjectured that elliptic curves over $\Q$ with long arithmetic progressions have large rank. To be accurate, Bremner says:

\emph{It seems that points of an arithmetic progression have the tendency to be linearly independent in the group of rational points (...)}

As explained in \cite{Bremner}, this conjecture is in part motivated by \cite{BST} which studies arithmetic progressions on rank one elliptic curves in quadratic twist families. There is a large body of work studying arithmetic progressions on elliptic curves ---both experimental and theoretical--- and we refer to \cite{GFP} for a literature review.

Bremner's rank conjecture was already proved as a consequence of our previous work \cite{GFP} via a combination of Nevanlinna theory and an application of the uniform Mordell--Lang theorem of Gao--Ge--K\"uhne \cite{GGK} (see Section \ref{SecProofHistory} for details). 

Our aim here is twofold. We give an alternative and very short proof of a (conditional) uniformity consequence of Bremner's conjecture, and we discuss a new uniformity phenomenon involving multiplicative groups and $x$-coordinates of rational points in elliptic curves. Both topics follow similar ideas, where the main point is an auxiliary curve of genus $2$ with split Jacobian.

%%%%%%
%%%%%%
%%%%%%

\subsection{Bremner's uniformity question} In the same paper \cite{Bremner}, even before discussing ranks, Bremner explicitly asked the following:

\begin{question}[Bremner's uniformity question]\label{QuestionB}
Can there exist arbitrarily large arithmetic progressions on elliptic curves (over $\Q$)?
\end{question}

Regarding uniform boundedness of arithmetic progressions on Mordell elliptic curves $y^2=x^3+k$, this particular family has attracted special attention under the name of Mohanty's conjecture and we refer the reader to \cite{GF} and the references therein for further discussion.

 The following immediate consequence of Theorem \ref{ThmBC} can be regarded as a conditional answer to Question \ref{QuestionB} and, in fact, it was consequences of this kind that served as a key motivation in our work \cite{GFP}.

\begin{theorem}\label{ThmUniform} If the ranks of elliptic curves over $\Q$ are uniformly bounded, then so are the lengths of arithmetic progressions on elliptic curves over $\Q$.
\end{theorem}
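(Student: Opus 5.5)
The plan is to follow the abstract's strategy: reduce to counting rational points on a curve of genus $2$ whose Jacobian is a product of elliptic curves, and then invoke the height-uniform Mordell theorem of Dimitrov--Gao--Habegger. That theorem bounds the number of rational points on a curve of genus $g\ge 2$ over $\Q$ by $c(g)^{1+\rk J(\Q)}$, and the essential point is that $c(g)$ is absolute.

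First I normalize. Suppose $E\colon y^2=f(x)$ has points $P_1,\dots,P_M$ with $x(P_i)=a+id$ and $d\neq 0$. The substitution $x\mapsto a+dx$ gives an isomorphic (possibly twisted) Weierstrass model $y^2=g(x)$ in which the $x$-coordinates are $1,\dots,M$. More precisely, $d^{3}y^2=g(x)$ becomes a genuine Weierstrass equation after rescaling $y$ by $d^2$ and a square. The resulting curve $E'\colon d' y^2=g(x)$ with $g\in\Q[x]$ cubic has rational points with $x=1,\dots,M$, and $E'$ is a quadratic twist of a curve isomorphic to $E$. By the hypothesis, $\rk E'(\Q)\le r$ for an absolute $r$.

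Next comes the auxiliary genus $2$ curve. I take the shifted cubic $g(x+1)$ and consider
\[
C\colon\ d'w^2=g(z^2) \quad\text{or rather}\quad d' w^2 = g(z)\,g(z+1)\cdots
\]
The cleanest choice is the following. The condition that both $x$ and $x+1$ lie in the progression is awkward, so I use the symmetric trick instead. Set $h(z)=g(z^2)$, a sextic in $z$, and take $C\colon d'w^2=h(z)$. This has genus $2$ and maps to $E'$ via $(z,w)\mapsto(z^2,w)$. It also maps to a second elliptic curve via $(z,w)\mapsto(z^{-2},w z^{-3})$, which is the twist of the reversed cubic $x^3g(1/x)$. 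So $J(C)$ is isogenous to $E'\times E''$, where $E''\colon d'y^2=x^3g(1/x)$ is again an elliptic curve over $\Q$ (for nonzero $g(0)$). Both ranks are at most $r$, so $\rk J(C)(\Q)\le 2r$.

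The difficulty is that $x=1,\dots,M$ are not squares in general, so the points of the progression do not lift to $C$. This is the step I expect to be the main obstacle: choosing the double cover so that the progression lifts. To handle it I will use the arithmetic-progression structure through a quadratic substitution $x=q(z)$, with $q$ a quadratic polynomial satisfying the following. First, $q(z)=n$ has rational solutions for many $n$ in the progression; taking $q(z)=z^2$ with the progression rescaled to contain the squares $1,4,9,\dots$ up to $\sqrt M$ is enough, since $k^2\le M$ lies in $\{1,\dots,M\}$. Second, $g(q(z))$ has Jacobian split into two elliptic curves, which holds for $q=z^2$ by the involution $z\mapsto -z$, as above.

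With this choice each $k\le\sqrt M$ gives points $(\pm k,y)$ on $C(\Q)$. Hence
\[
\#C(\Q)\ge 2\lfloor\sqrt M\rfloor .
\]
Dimitrov--Gao--Habegger gives
\[
\#C(\Q)\le c(2)^{1+2r}.
\]
Together these bound $M\le (c(2)^{1+2r})^2$, uniformly in $E$. A small amount of care is needed to ensure that $C$ is smooth of genus exactly $2$, that is, that $g(z^2)$ is squarefree. This holds because $g$ is squarefree, provided $g(0)\ne0$. If $g(0)=0$, I first shift the progression by an integer, which is harmless after discarding a bounded number of terms.
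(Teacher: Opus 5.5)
Your argument is correct and is essentially the paper's proof. Your curve $w^2=g(z^2)=f(a+dz^2)$ is, up to renaming constants, the paper's $s^2=f(at^2+b)$, with the same split Jacobian $E\times E''$ and the same lifted points $z=\pm k$ for $k^2$ in the progression, bounded by Dimitrov--Gao--Habegger; the remaining differences are cosmetic. Your $d'$/twist detour is unnecessary, since $(x,y)\mapsto(a+dx,y)$ is already a $\mathbb{Q}$-isomorphism onto $E$, and you ensure $g(0)\neq 0$ by shifting rather than by discarding up to three initial terms.
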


We refer the reader to \cite{PPVW} for a detailed study of the question of uniform boundedness of ranks of elliptic curves, especially over $\Q$. 

In Section \ref{SecProof} we present a simple and short proof of Theorem \ref{ThmUniform} that avoids the heavy machinery of Nevanlinna theory that we used in \cite{GFP} and which ``only'' uses the height-uniform Mordell theorem of Dimitrov--Gao--Habegger \cite{DGH}. Key to our argument is the use of genus $2$ curves with split Jacobian.

The result also holds over number fields, but the question of uniform boundedness of ranks in more generality than $\Q$ seems more dubious, so we keep the discussion over $\Q$ (which, by the way, is the original setting discussed by Bremner \cite{Bremner}).

%%%%%%
%%%%%%
%%%%%%

\subsection{Another uniformity result} 

Bremner's uniformity question relates the group structure of an elliptic curve to the additive structure of the affine line. A natural problem is to use the $x$-coordinate map to relate the group structure in the elliptic curve to finitely generated multiplicative groups on $\G_m$. Using the same method of constructing genus $2$ curves with split Jacobian as in our new proof of Theorem \ref{ThmUniform}, we prove a result that hints at a new uniformity question:

\begin{theorem}\label{ThmMult} If the ranks of elliptic curves over $\Q$ are uniformly bounded, then there is a constant $\kappa>1$ with the following property:

Let $E$ be an elliptic curve over $\Q$ given by a Weierstrass equation $y^2=f(x)$ such that $f(0)\ne 0$ and denote by $x:E\to\Pro^1$ the $x$-coordinate map. Let $\Gamma$ be a finitely generated multiplicative subgroup of $\Q^{\times}$ and let $\rho$ be its rank. Then
$$
\#\left(x(E(\Q))\cap \Gamma \right) \le \kappa\cdot 2^\rho.
$$
\end{theorem}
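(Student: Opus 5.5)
The plan is to split $\Gamma$ into square classes, attach to each class a genus $2$ curve with split Jacobian, and bound its rational points uniformly using the height-uniform Mordell theorem of Dimitrov--Gao--Habegger \cite{DGH}. Let $R$ be the assumed uniform bound for ranks of elliptic curves over $\Q$.

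\emph{Reduction to square classes.} The torsion of $\Q^\times$ is $\{\pm1\}$, so $\Gamma\simeq \mu\times\mathbb{Z}^\rho$ with $\mu\subseteq\{\pm1\}$. Hence $\#(\Gamma/\Gamma^2)\le 2^{\rho+1}$. Fix coset representatives $c_1,\dots,c_m\in\Gamma$ with $m\le 2^{\rho+1}$. Every $x\in\Gamma$ can be written as $x=c_i u^2$ for some $i$ and some $u\in\Q^\times$. It therefore suffices to show that for each $c\in\Q^\times$,
$$\#\{x\in x(E(\Q)) : x\in c\,(\Q^\times)^2\}\le \kappa_0,$$
with $\kappa_0$ independent of $E$ and $c$. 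One then takes $\kappa=2\kappa_0$.

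\emph{The auxiliary genus $2$ curve.} For $c\in\Q^\times$ consider
$$C_c:\; y^2=f(cu^2).$$
The roots of $f(cu^2)$ are the values $u=\pm\sqrt{\alpha/c}$, where $\alpha$ runs over the three distinct roots of $f$. Since $f(0)\neq 0$, every $\alpha\ne0$, so these six values are pairwise distinct. Thus $f(cu^2)$ is a separable sextic and $C_c$ is a smooth genus $2$ curve.

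Each $x=cu^2$ with $x\in x(E(\Q))$ gives a rational point $(u,y)$ on $C_c$, and at most two values $\pm u$ correspond to the same $x$. So the number of such $x$ is at most $\#C_c(\Q)$.

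\emph{Splitting of the Jacobian.} $C_c$ has the two commuting involutions $(u,y)\mapsto(-u,y)$ and $(u,y)\mapsto(-u,-y)$.
\begin{itemize}
\item The quotient by the first has invariants $t=u^2$ and $y$. It is $y^2=f(ct)$, which is isomorphic to $E$ via $x=ct$.
\item The quotient by the second has invariants $t=u^2$ and $w=uy$. It is $E'_c: w^2=t\,f(ct)$. This is a separable quartic model, again because $f(0)\ne0$, so $E'_c$ is an elliptic curve over $\Q$; it has the rational point $(0,0)$.
\end{itemize}
The standard argument for genus $2$ curves with two such involutions shows that $\mathrm{Jac}(C_c)$ is isogenous over $\Q$ to $E\times E'_c$. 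Hence
$$\rk \mathrm{Jac}(C_c)(\Q)\le 2R.$$

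\emph{Applying \cite{DGH}.} For genus $2$ curves over $\Q$ the theorem gives
$$\#C_c(\Q)\le c(2)^{1+\rk \mathrm{Jac}(C_c)(\Q)}\le c(2)^{1+2R}=:\kappa_0,$$
where $c(2)$ is an absolute constant. Combining with the square-class reduction yields
$$\#\left(x(E(\Q))\cap\Gamma\right)\le 2^{\rho+1}\kappa_0=\kappa\cdot 2^\rho,\qquad \kappa=2\kappa_0,$$
and $\kappa_0\ge1$ ensures $\kappa>1$.

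\emph{Main obstacle.} The only delicate point is the geometry: checking that $C_c$ is genuinely of genus $2$ and that both quotients are elliptic curves defined over $\Q$, so that the rank hypothesis applies to both factors of the Jacobian. This is exactly where $f(0)\ne0$ enters.
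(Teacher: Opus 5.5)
Your proposal is correct and follows the paper's proof essentially step for step: you take representatives of $\Gamma/\Gamma^2$ (at most $2^{\rho+1}$ of them), attach to each the genus $2$ curve $y^2=f(cu^2)$ whose Jacobian is isogenous over $\Q$ to $E$ times another elliptic curve, and apply Dimitrov--Gao--Habegger, arriving at the same constant $\kappa=2c^{1+2R}$. The one thing you add is an explicit description of the complementary factor $w^2=t\,f(ct)$ via the involution $(u,y)\mapsto(-u,-y)$, which the paper leaves implicit by simply noting that $E$ is an isogeny factor of the Jacobian.
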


A special case is that of geometric progressions. The study of \emph{consecutive} terms of a geometric progression appearing as $x$-coordinates of rational points of an elliptic curve has captured some attention and we refer the reader to \cite{BremnerUlas, CissMoody, GFPpatterns, HMS} and the references therein. As a corollary of the previous result, we note that uniform boundedness can be expected even if the terms of the geometric progression are not consecutive.

\begin{corollary} If the ranks of elliptic curves over $\Q$ are uniformly bounded, then there is a uniform bound $B$ such that the following holds:

Let $E$ be an elliptic curve over $\Q$ given by a Weierstrass equation $y^2=f(x)$ such that $f(0)\ne 0$ and denote by $x:E\to\Pro^1$ the $x$-coordinate map. If $a, ab, ab^2,...$ is a geometric progression in $\Q^{\times}$ with $b\ne\pm 1$, then there are at most $B$ rational points of $E$ with $x$-coordinate in this geometric progression.
\end{corollary}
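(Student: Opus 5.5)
The corollary should follow from Theorem \ref{ThmMult} once we restrict to a well-chosen finitely generated subgroup. The only problem is that the obvious subgroup generated by $a$ and $b$ has rank up to $2$, and a bound of the form $\kappa\cdot 2^\rho$ with $\rho\le 2$ is already uniform. So the plan is simply to apply Theorem \ref{ThmMult} with this subgroup.

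Assume the ranks of elliptic curves over $\Q$ are uniformly bounded, and let $\kappa$ be the constant provided by Theorem \ref{ThmMult}. Let $E:y^2=f(x)$ with $f(0)\neq 0$, and let $a,ab,ab^2,\dots$ be a geometric progression in $\Q^\times$ with $b\neq \pm1$. Take $\Gamma=\langle a,b\rangle\subseteq \Q^\times$. This is a finitely generated multiplicative group of rank $\rho\le 2$, and it contains every term $ab^n$ of the progression. Theorem \ref{ThmMult} then gives
$$
\#\left(x(E(\Q))\cap\{ab^n : n\ge 0\}\right)\le \#\left(x(E(\Q))\cap \Gamma\right)\le 4\kappa .
$$

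This bounds the number of distinct $x$-coordinates that occur. To pass to a bound on points, we use two facts. First, each $x$-value comes from at most two rational points $(x,\pm y)$. Second, the terms $ab^n$ are pairwise distinct, since $b\neq\pm1$ and $b\ne0$ mean $b$ is not a root of unity in $\Q$. Hence at most $8\kappa$ rational points of $E$ have $x$-coordinate in the progression, and we may take $B=8\kappa$, which depends neither on $E$ nor on $a,b$.

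There is no real obstacle here. The only points to check are that the hypothesis $f(0)\neq0$ carries over verbatim to the setting of Theorem \ref{ThmMult}, and that the bound must be doubled if "points" counts both $\pm y$. The hypothesis $b\ne\pm1$ is needed only so that the statement is meaningful, that is, so that the progression has infinitely many distinct terms; it plays no role in the bound itself. All the substance lies in Theorem \ref{ThmMult}.
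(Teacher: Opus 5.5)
Your proposal is correct and is the paper's argument: apply Theorem \ref{ThmMult} with $\Gamma=\langle a,b\rangle$ (rank at most $2$), then double the bound because each $x$-coordinate comes from at most two points $(x,\pm y)$, giving $B=8\kappa$. The pairwise distinctness of the terms $ab^n$ is not needed, since you only bound points whose $x$-coordinate lies in the set $\{ab^n\}\subseteq\Gamma$.
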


Indeed, one takes $\Gamma = \langle a,b\rangle$; note that at most two rational points have the same $x$-coordinate. The proof of Theorem \ref{ThmMult} is similar to that of Theorem \ref{ThmUniform} and is given in Section \ref{SecMult}.

%%%%%%
%%%%%%
%%%%%%

\subsection{Rank-dependent bound for multiplicative groups}

Given the recent developments on the height-uniform Mordell Conjecture (see Section \ref{SecMordell}) and, more generally, the height-uniform Mordell--Lang Conjecture \cite{GGK} it is reasonable to expect that a similar result for curves in semiabelian varieties is within reach (see also the comments in Section 1.3 of \cite{GGK} concerning semiabelian varieties). Let us state here the simplest case concerning a semiabelian variety that is not a torus or an abelian variety:

\begin{conjecture}[Height-uniform Mordell--Lang for a semiabelian case]\label{ConjSemiabelian} Let $d\ge 1$ be an integer. There is a constant $c(d)>1$ depending only on $d$ such that the following holds:

Consider the semiabelian surface $A=E\times \G_m$ where $E$ is an elliptic curve over $\Q$. Let $C\subseteq A$ be an irreducible curve defined over $\Q$ that is generically finite of degree $\le d$ over $E$ and over $\G_m$ under the coordinate projections $A\to E$ and $A\to\G_m$. Let $G\subseteq A(\Q)$ be a finitely generated group of rank $r$. Then
$$
\#(C\cap G) \le c(d)^{1+r}.
$$
\end{conjecture}

This statement is closely related to our discussion.
\begin{theorem} Assume Conjecture \ref{ConjSemiabelian}. There is a constant $K>1$ such that the following holds:

Let $E$ be an elliptic curve over $\Q$ of rank $r$ given by a Weierstrass equation $y^2=f(x)$ and denote by $x:E\to\Pro^1$ the $x$-coordinate map. Let $\Gamma$ be a finitely generated multiplicative subgroup of $\Q^{\times}$ and let $\rho$ be its rank. Then
$$
\#\left(x(E(\Q))\cap \Gamma \right) \le K^{\rho +r +1}.
$$
\end{theorem}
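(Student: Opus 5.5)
Consider the semiabelian surface $A=E\times\G_m$ and inside it the curve
$$
C=\{(P,t)\in E\times \G_m : x(P)=t\}.
$$
Concretely, $C$ is the graph of the rational function $x:E\to\Pro^1$ restricted to $E\setminus x^{-1}(\{0,\infty\})$. Its Zariski closure in $A$ is irreducible and defined over $\Q$. The projection $C\to E$ is birational, hence of degree $1$. The projection $C\to\G_m$ is the $x$-coordinate map, hence of degree $2$. So $C$ satisfies the hypotheses of Conjecture \ref{ConjSemiabelian} with $d=2$, uniformly in $E$. (If $f(0)=0$ nothing changes: the point $(0,0)$ has $x=0\notin\Gamma$ and is simply not on $C$.)

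Next, let $G=E(\Q)\times\Gamma\subseteq A(\Q)$. This is a finitely generated group of rank $r+\rho$, since $E(\Q)$ has rank $r$ by Mordell--Weil and $\Gamma$ has rank $\rho$; torsion does not affect the rank. Each $t\in x(E(\Q))\cap\Gamma$ comes from some $P\in E(\Q)$ with $x(P)=t\in\Gamma\subseteq\Q^\times$, so $(P,t)\in C\cap G$. The map $C\cap G\to x(E(\Q))\cap\Gamma$, $(P,t)\mapsto t$, is therefore surjective, and we get
$$
\#\left(x(E(\Q))\cap\Gamma\right)\le \#(C\cap G)\le c(2)^{1+r+\rho}.
$$
Taking $K=c(2)$ gives the theorem.

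The only point needing care is that Conjecture \ref{ConjSemiabelian} applies to $C$ as stated: it must be an irreducible curve in $A$ that is generically finite of degree at most $d$ over both factors. Irreducibility holds because $C$ is isomorphic to an open subset of $E$. Generic finiteness holds because both projections are nonconstant morphisms of curves, with degrees $1$ and $2$ as computed above. I see no genuine obstacle; all the depth lies in the conjecture.
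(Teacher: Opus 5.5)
Your proposal is correct and is essentially the paper's argument: take $C\subseteq E\times\G_m$ to be the graph of $x:E\dasharrow\G_m$, and apply Conjecture~\ref{ConjSemiabelian} with $d=2$ and $G=E(\Q)\times\Gamma$ of rank $r+\rho$, giving $K=c(2)$. Your checks of irreducibility, of the projection degrees $1$ and $2$, and of the surjectivity of $C\cap G\to x(E(\Q))\cap\Gamma$ fill in details that the paper leaves implicit.
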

In fact, one uses Conjecture \ref{ConjSemiabelian} by choosing  $C\subseteq E\times \G_m$ as the graph of the $x$-coordinate rational map $x:E\dasharrow \G_m$. Then one takes $d=2$ and $G=E(\Q)\times \Gamma$. Of course there is nothing special about the $x$-coordinate map and a similar result (with other choices of $d$) holds for any non-constant map $E\to \Pro^1$.

For geometric progressions one gets a rank-dependent bound for the total number of terms coming from rational points of the elliptic curve, not only consecutive terms (again, at most two rational points have the same $x$-coordinate). 
\begin{corollary}[Rank-dependent bound for geometric progressions] Assume Conjecture \ref{ConjSemiabelian}. There is a constant $K>1$ such that the following holds:

Let $E$ be an elliptic curve over $\Q$ of rank $r$ given by a Weierstrass equation $y^2=f(x)$ and denote by $x:E\to\Pro^1$ the $x$-coordinate map. If $a, ab, ab^2,...$ is a geometric progression in $\Q^{\times}$ with $b\ne\pm 1$, then there are at most $K^{r+1}$ rational points of $E$ with $x$-coordinate in this geometric progression.
\end{corollary}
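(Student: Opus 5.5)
The corollary follows from the preceding theorem (which assumes Conjecture \ref{ConjSemiabelian}) applied with $\Gamma=\langle a,b\rangle\subseteq \Q^{\times}$, after a short counting argument. Let $K_0$ be the constant provided by that theorem. The group $\Gamma$ is generated by two elements, so its rank satisfies $\rho\le 2$. The theorem therefore gives
$$
\#\left(x(E(\Q))\cap \Gamma\right)\le K_0^{\rho+r+1}\le K_0^{r+3}.
$$
Every term $ab^n$ of the geometric progression lies in $\Gamma$. Hence the set of $x$-coordinates of rational points lying in the progression is a subset of $x(E(\Q))\cap\Gamma$, and it has at most $K_0^{r+3}$ elements.

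Next we pass from $x$-coordinates to points. A rational point with a given finite $x$-coordinate $x_0$ satisfies $y^2=f(x_0)$, so there are at most two such points. The point at infinity has no $x$-coordinate in $\Q^{\times}$, so it does not occur. Thus the number of rational points of $E$ whose $x$-coordinate lies in the progression is at most $2K_0^{r+3}$.

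It remains to write this bound in the form $K^{r+1}$. Since $K_0>1$ and $r\ge 0$,
$$
2K_0^{r+3}\le (2K_0^{3})^{r+1}\cdot K_0^{r+1}\cdot 1 \le (2K_0^4)^{r+1}.
$$
We take $K=2K_0^4$, which depends only on $K_0$. The hypothesis $b\ne\pm1$ is not needed for the bound. It only ensures that the progression takes infinitely many distinct values (if $b$ is torsion, the terms take at most two values), so the statement is not vacuous; the count itself is over $x$-coordinates regardless.

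There is no real obstacle here; the work is in the preceding theorem. The one point to check is that the constant $K_0$ does not depend on $\Gamma$ or $E$, so that bounding $\rho\le 2$ uniformly yields a constant $K$ independent of $a$, $b$ and $E$.
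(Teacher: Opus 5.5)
Your proposal is correct and is essentially the paper's argument: apply the preceding theorem with $\Gamma=\langle a,b\rangle$, whose rank is at most $2$, and multiply by $2$ because at most two rational points share an $x$-coordinate. Your absorption of constants via $2K_0^{r+3}\le (2K_0^4)^{r+1}$ is valid.
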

%%
%%

%%%%%%
%%%%%%
%%%%%%

\subsection{About the proof of Bremner's conjecture}\label{SecProofHistory} Bremner's conjecture was proved some years ago. It is an immediate consequence of our previous work \cite{GFP} in the following strong form:

\begin{theorem}[Strong form of Bremner's conjecture]\label{ThmBC} There is an absolute constant $C>1$ such that if $E$ is an elliptic curve over $\Q$ with rank $r$, then all arithmetic progressions on $E$ have length bounded by $C^{r+1}$.\end{theorem}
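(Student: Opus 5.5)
Theorem \ref{ThmBC} follows from the Gao--Ge--K\"uhne uniform Mordell--Lang theorem \cite{GGK} applied to a curve in a power of $E$ whose degree is absolutely bounded. The approach is to repackage an arithmetic progression as rational points on a fixed auxiliary curve, with the rank of the ambient group controlled by $r$. The key point is that the auxiliary curve must have genus $\ge 2$ and bounded degree, independently of $E$ and of the progression.

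\textbf{Normalization.} Suppose $P_1,\dots,P_M\in E(\Q)$ have $x(P_i)=a+ib$ with $b\neq 0$. Replacing the Weierstrass equation $y^2=f(x)$ by the one in the coordinate $u=(x-a)/b$, I may assume $x(P_i)=i$ for $i=1,\dots,M$. This is again an elliptic curve over $\Q$ with the same rank, so it suffices to bound the length of progressions with consecutive integer $x$-coordinates $1,\dots,M$.

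\textbf{Auxiliary curve.} Consider the curve
$$X=\{(Q_1,Q_2,Q_3)\in E^3:\ x(Q_2)=x(Q_1)+1,\ x(Q_3)=x(Q_1)+2\}.$$
Each triple $(P_i,P_{i+1},P_{i+2})$ with $1\le i\le M-2$ gives a point of $X\cap \Gamma$, where $\Gamma=E(\Q)^3$ has rank $3r$. So $M-2\le \#(X(\bar\Q)\cap\Gamma)$. Since $X$ is cut out by linear conditions on $x$-coordinates, its degree with respect to a fixed symmetric ample bundle (the product of the $x$-pullbacks) is bounded by an absolute constant. Indeed, $X$ is an $8$-fold cover of $\Pro^1$ built from the double covers $y_j^2=f(x+j)$. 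Every irreducible component of $X$ should have genus $\ge 2$ and should not be a coset of an elliptic curve in $E^3$. If such a coset existed, the shift map $x\mapsto x+1$ would be induced by a group translation on $E$ compatible with $x$. That is impossible, since translations commute with $[-1]$ only for $2$-torsion shifts, and these do not act on $x$ by translation. This has to be checked using the fibre product structure, with ramification at the roots of $f(x)$, $f(x+1)$ and $f(x+2)$.

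\textbf{Applying uniform Mordell--Lang.} Gao--Ge--K\"uhne give $\#(X\cap\Gamma)\le c(g,\deg)^{1+\rk\Gamma}$ for curves that are not cosets, with the constant depending only on the dimension of the ambient abelian variety and the degree. Here both are absolutely bounded, so $M\le 2+c^{1+3r}\le C^{r+1}$.

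\textbf{Main obstacle.} The step I expect to be hardest is verifying that no component of $X$ is a translate of an abelian subvariety, uniformly in $E$. It could happen, for special $f$, that a component degenerates, for example when roots of $f$ differ by $1$ or $2$. I would first try to handle this with a Riemann--Hurwitz genus count of the fibre product of double covers. In degenerate cases, I would drop one of the shifted factors and work in $E^2$, where the two-coordinate curve $x(Q_2)=x(Q_1)+1$ already has genus $\ge 2$ unless $f(x+1)$ and $f(x)$ share all their roots, which is impossible for a cubic.
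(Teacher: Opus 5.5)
Your proposal is correct, but it follows a different route from the one the paper relies on. The paper does not reprove Theorem \ref{ThmBC}. It invokes Theorem 6.1 of \cite{GFP}, where a Nevanlinna-theoretic argument reduces the problem to a Mordell--Lang statement for \emph{surfaces} in powers $E^n$. It then replaces R\'emond's height-dependent constant $c(E^n,\Lcal_n)$ by the Gao--Ge--K\"uhne constant.

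You avoid surfaces and Nevanlinna theory altogether. Consecutive triples (or pairs) of the progression are points of a single curve $X\subset E^3$ (or $E^2$). Its degree with respect to the pullback of $\mathcal{O}(1,1,1)$ under $x\times x\times x$ is $24$ (resp.\ $8$ for the analogous bundle on $E^2$), independently of $E$ and of the common difference. You intersect it with $\Gamma=E(\Q)^3$, whose rank is exactly $3r$.

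This is the unconditional counterpart of the paper's proof of Theorem \ref{ThmUniform}. There the genus $2$ curve has Jacobian isogenous to $E\times E'$ with $\rk E'(\Q)$ uncontrolled, which is why a uniform rank bound has to be assumed. Working inside $E^k$ with $\Gamma=E(\Q)^k$ removes $E'$ from the picture. The price is using uniform Mordell--Lang for curves in an arbitrary abelian variety, with constants depending only on dimension and degree (\cite{GGK}), instead of the Jacobian statement of Theorem \ref{ThmDGH}. Given \cite{GGK}, your argument is much shorter. The route through \cite{GFP} is far heavier and was developed when only R\'emond's height-dependent bounds were available; for Theorem \ref{ThmBC} itself your curve suffices on its own.

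The step you flag as the main obstacle has no exceptional cases, and you do not need the heuristic about translations commuting with $[-1]$. That heuristic only addresses a splitting of $X$ into graphs of automorphisms, so it is not a proof. Write $u=x(Q_1)$. The function field of $X$ over $\overline{\Q}(u)$ is $\overline{\Q}(u)(\sqrt{f(u)},\sqrt{f(u+1)},\sqrt{f(u+2)})$, and it has degree $8$ because no nonempty product of $f(u),f(u+1),f(u+2)$ is a square:
\begin{itemize}
\item a product of one or three factors has odd degree;
\item a product of two factors is a square only if the root set of $f$ is invariant under translation by $1$ or $2$, which is impossible for a finite nonempty subset of $\overline{\Q}$.
\end{itemize}
Since $X\to\Pro^1$ is finite and flat with generic fibre a field, $X$ is irreducible. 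The Jacobian of its normalization is isogenous to the product of the Jacobians of the seven intermediate quadratic covers of $\Pro^1$, and three of these are isomorphic to $E$. So the geometric genus is at least $3$ and $X$ is not a coset of an elliptic curve. Coincidences among the roots of $f(u),f(u+1),f(u+2)$ can only lower the genus, never below $3$. The $E^2$ version works just as well, with genus at least $2$ and $M-1\le c^{1+2r}$.
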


More precisely, in 2019 we proved this result with $C$ replaced by a quantity $C_0(j_E)$ that only depends on the $j$-invariant $j_E$ of $E$, which was strong enough to prove Bremner's conjecture over twist families, see \cite{GFP}. The core of the proof is a technical Nevanlinna-theoretical argument that reduced the problem to a suitable version of the Mordell--Lang conjecture for surfaces contained in abelian varieties. At the time, only R\'emond's quantitative version of Mordell--Lang was available \cite{Remond1, Remond2}, whose constants depended on the Faltings height of the abelian variety. But the situation improved in 2021 when Gao--Ge--K\"uhne \cite{GGK} removed the dependence on the height and, in our setting, this resulted in removing the dependence on $j_E$ (for the interested reader: this change must be made in the first paragraph after the proof of Lemma 6.5 in \cite{GFP} when one introduces the constant $c(E^n,\Lcal_n)$; everything else in the proof of Theorem 6.1 is the same and the statement gets upgraded accordingly). This is mentioned, for instance, in \cite{Choi}. 

It should be noted that this state of affairs is not unique to our theorem: it is well-known to experts that most ---if not all!--- applications of R\'emond's bound became height-uniform thanks to the Gao--Ge--K\"uhne theorem. In fact, after our proof of Bremner's conjecture in \cite{GFP}, the idea of studying additive patterns in the $x$-coordinates of rational points of elliptic curves via the Gao--Ge--K\"uhne theorem was further developed by Caro and the first author of this note in \cite{CGF}.

 We remark that another proof of Theorem \ref{ThmBC} was recently claimed in the preprint  \cite{Choi}, although it assumes a certain height conjecture of Lang, so at present, that proof currently applies only to certain restricted families of elliptic curves. Yet another new (unconditional) proof of Bremner's conjecture in the form of Theorem \ref{ThmBC} was very recently claimed in the preprint \cite{HMS} using different techniques. Both arguments are independent of our original proof of Theorem \ref{ThmBC} and they have their own features.
 
Finally, we mention two technical points for experts: our Theorem \ref{ThmBC} holds over any number field (not just $\Q$), and one can replace the $x$-coordinate map in the definition of arithmetic progression by other rational functions (for instance, $y$-coordinates) leading to the same kind of result. This is in fact the way we run the argument in Theorem 6.1 of \cite{GFP}.

%%%%%%%%%%%%%%%%%%%%%%%%%%%%%%%%%%%%%%
%%%%%%%%%%%%%%%%%%%%%%%%%%%%%%%%%%%%%%
%%%%%%%%%%%%%%%%%%%%%%%%%%%%%%%%%%%%%%
%%%%%%%%%%%%%%%%%%%%%%%%%%%%%%%%%%%%%%
%%%%%%%%%%%%%%%%%%%%%%%%%%%%%%%%%%%%%%
%%%%%%%%%%%%%%%%%%%%%%%%%%%%%%%%%%%%%%

\section{Height-uniform Mordell} \label{SecMordell}

In 1922, Mordell \cite{Mordell} proposed his celebrated conjecture on rational points of curves:

\begin{conjecture}[Mordell] Let $k$ be a number field and $X$ a smooth projective curve of genus $g\ge 2$ defined over $k$. Then the set of rational points $X(k)$ is finite. 
\end{conjecture}

This conjecture was proved by Faltings in his spectacular work \cite{Faltings1}. A second proof with completely different ideas was produced by Vojta in \cite{Vojta}. While Vojta's initial argument was highly sophisticated and used Arakelov geometry, Bombieri \cite{Bombieri} translated it into classical diophantine approximation terms that were later further developed by Faltings to solve the Mordell--Lang conjecture \cite{Faltings2,Faltings3}.  

At the core of Vojta's proof there is a gap phenomenon first discovered by Mumford \cite{Mumford}. From this gap phenomenon it was possible to extract a bound for the \emph{number} of rational points. In a remarkably explicit work, R\'emond \cite{Remond1,Remond2} succeeded in finding such bounds in the more general context of the Mordell--Lang conjecture. For curves (i.e. in the context of Mordell's conjecture), R\'emond's bound crucially depended on
\begin{itemize}
\item The genus of the curve
\item The Mordell--Weil rank of the Jacobian, and
\item The Faltings height of the Jacobian.
\end{itemize}

Conjecturally, there was room for improvement. For instance, Caporaso--Harris--Mazur \cite{CHM} showed that the Bombieri--Lang conjecture \emph{in all dimensions} implies that for any integer $g\ge 2$ and number field $k$, there is a bound $B(g,k)$ depending only on $g$ and $k$ such that every smooth projective curve $X$ of genus $g$ defined over $k$ satisfies 
$$
\# X(k)\le B(g,k). 
$$
A somewhat milder problem was asked by Mazur \cite{Mazur}:  Is there a bound for $\# X(k)$ which is independent of the Faltings height of the Jacobian (although it could still depend on the Mordell--Weil rank)?

In 2020, Dimitrov--Gao--Habegger \cite{DGH} finally answered Mazur's question by proving the following remarkable height-uniform upper bound:

\begin{theorem}[Height-uniform Mordell]\label{ThmDGH} Let $g\ge 2$ and $d\ge 1$ be integers. There is a constant $c=c(g,d)$ depending only on $g$ and $d$ such that if $X$ is a smooth projective curve of genus $g$ defined over a number field $k$ of degree $d$ over $\Q$, then 
$$
\# X(k)\le c^{1+\rho}
$$ 
where $\rho$ is the Mordell--Weil rank of the Jacobian of $X$ over $k$.
\end{theorem}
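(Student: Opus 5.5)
Since Theorem \ref{ThmDGH} is quoted from \cite{DGH}, what follows is only my reconstruction of the Vojta-style strategy behind it, not an argument taken from this paper. The plan is to combine the classical Mumford--Vojta gap principles, which control rational points of \emph{large} height, with a new height inequality that controls points of \emph{small} height. Both bounds must be stated with constants depending only on $g$ and $d$.

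Fix $X/k$ of genus $g$ with $[k:\Q]=d$. If $X(k)=\emptyset$ there is nothing to prove. Otherwise, choose $P_0\in X(k)$ and embed $X\hookrightarrow J$ by $P\mapsto [P-P_0]$. Work in the Euclidean space $V=J(k)\otimes\mathbb{R}\cong\mathbb{R}^\rho$ with the norm $|\cdot|=\hat h^{1/2}$ given by the Néron--Tate height attached to the symmetric theta divisor, and let $h=\max(1,h_F(J))$ denote the Faltings height. Split $X(k)$ into large points, with $|P|^2\ge c_1 h$, and small points, with $|P|^2< c_1 h$.

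\textbf{Large points.} Cover $V$ by at most $c^\rho$ cones of small angle, which is a sphere-packing count. For two large points $P,Q$ in the same cone with $|Q|\ge|P|$, Mumford's inequality forces $|Q|\ge 2|P|$. Vojta's inequality, in Rémond's explicit form, forces $|Q|\le c_2|P|$ once $|P|^2\ge c_1h$. Here I would check that both inequalities hold with error terms that are linear in $h$ and constants depending only on $g$ and $d$. This is exactly why the cutoff is taken proportional to $h$: the $h$-dependent error terms are then absorbed. Consequently each cone contains $O_{g,d}(1)$ large points, and the large points number at most $c^{\rho}$ in total.

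\textbf{Small points, the main obstacle.} A naive count of the lattice points in a ball of radius $\sqrt{c_1h}$ would depend on $h$, and removing that dependence is the heart of the matter. I would aim for a height inequality of the following shape. For $M$ large (in terms of $g$), the Faltings--Zhang map
\[
D_M\colon X^{M+1}\to J^M,\qquad (x_0,\dots,x_M)\mapsto (x_1-x_0,\dots,x_M-x_0),
\]
should satisfy
\[
\hat h(D_M(x))\ge c_3\, h - c_4
\]
for all $x$ outside a proper Zariski closed subset, with $c_3,c_4$ depending only on $g$ and $d$. To prove it, I would pass to a fine moduli space $\mathbb{M}_{g,\ell}$ with level structure, so that we have the universal curve and the universal Jacobian family. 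The key step is then to show that the Betti map of the image of $D_M$ is generically of maximal rank; I expect this nondegeneracy to require Ax--Schanuel-type functional transcendence. Next, I would apply a Silverman--Tate-type comparison to the nondegenerate subvariety, which gives $\hat h\ge c\,h_{\text{moduli}}-c'$. Finally, I would relate the moduli height to $h_F$ and handle the exceptional closed set by Noetherian induction on the fibered powers.

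\textbf{Conclusion.} Given the height inequality, the $M+1$ small points cannot all lie within distance $\approx\sqrt{c_3h/M}$ of each other, except when the tuple falls into the exceptional set, which is controlled by induction. Cover the ball of radius $\sqrt{c_1h}$ by $c_5^{\rho}$ balls of radius $\sqrt{c_3h/(4M)}$, where $c_5$ depends only on $c_1/c_3$ and $M$. Each small ball then contains $O_{g,d}(1)$ points of $X(k)$. Adding this to the large-point count gives $\#X(k)\le c^{1+\rho}$.
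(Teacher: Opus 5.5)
The paper gives no proof of this statement; it is quoted from \cite{DGH}. Your outline does reproduce the Dimitrov--Gao--Habegger architecture: R\'emond's explicit Vojta--Mumford count for points with $\hat h(P-P_0)\ge c\max(1,h_F(J))$, plus a new height inequality for the Faltings--Zhang map coming from non-degeneracy of the Betti map. But your small-point step has a genuine gap. The inequality $\hat h(D_M(x))\ge c_3h-c_4$ carries an additive constant, so it separates points only when $h_F(J)$ exceeds some threshold $h_0$. It also needs a smaller radius than yours: $M+1$ points in a ball of radius $\sqrt{c_3h/(4M)}$ only give $\hat h(D_M(x))\le c_3h$, which does not contradict $\hat h(D_M(x))\ge c_3h-c_4$. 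When $h_F(J)\le h_0$, your ``small'' region is a ball of bounded radius in $J(k)\otimes\mathbb{R}$, and nothing in the proposal bounds the number of points of $X(k)$ lying there by $c^{1+\rho}$. A count of points of bounded N\'eron--Tate height that is uniform over all genus $g$ curves is a uniform Bogomolov-type statement, and it does not follow from the height inequality.

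This missing case is precisely where $d$ enters in \cite{DGH}. R\'emond's count of large points and the height inequality have constants depending only on $g$. For curves of bounded height one instead invokes Northcott on the coarse moduli space: the moduli point of $X$ is a $k$-point with $[k:\Q]=d$ and bounded height, so only finitely many $\bar{\Q}$-isomorphism classes occur. One then applies R\'emond's height-dependent bound \cite{Remond1} to each of these finitely many curves. That bound is valid for arbitrary finite-rank subgroups of $J(\bar{\Q})$, so twists and the choice of $P_0$ are harmless. Your argument never uses $d$, so as written it would give $c(g)^{1+\rho}$. That stronger statement is true, but it required an additional ingredient (K\"uhne's equidistribution theorem, i.e.\ uniform Bogomolov), which signals that a step is missing.

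A lesser imprecision concerns the exceptional locus of the height inequality. It is handled by Noetherian induction on the base $\mathbb{M}_g$, which requires non-degeneracy over every subvariety of the base and hence $M$ large in terms of $g$. This is combined with a uniform degree bound for the fibres of the exceptional set and a counting lemma that bounds $\#\Sigma$ when $\Sigma^{M+1}$ lies in a proper subvariety of $X^{M+1}$ of bounded degree. ``Induction on the fibered powers'' alone does not do this.
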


After the Dimitrov--Gao--Habegger paper, there have been several extensions and, most recently, a completely explicit height-uniform upper bound has been obtained in \cite{YYZ}.

%%%%%%%%%%%%%%%%%%%%%%%%%%%%%%%%%%%%%%
%%%%%%%%%%%%%%%%%%%%%%%%%%%%%%%%%%%%%%
%%%%%%%%%%%%%%%%%%%%%%%%%%%%%%%%%%%%%%
%%%%%%%%%%%%%%%%%%%%%%%%%%%%%%%%%%%%%%
%%%%%%%%%%%%%%%%%%%%%%%%%%%%%%%%%%%%%%
%%%%%%%%%%%%%%%%%%%%%%%%%%%%%%%%%%%%%%

\section{Bremner's uniformity question} \label{SecProof}

\begin{proof}[Proof of Theorem \ref{ThmUniform}] Let $E$ be an elliptic curve over $\Q$ with Weierstrass equation $y^2=f(x)$ where $f\in \Q[x]$ is a monic cubic polynomial. Consider an arithmetic progression on $E$ of length $M\ge 4$. Then $E$ contains an arithmetic progression of length $N=M-3$ whose first term is not a $2$-torsion point of $E$. Let 
$$
b, a+b, 2a+b, ..., (N-1)a+b
$$ 
be the $x$-coordinates of this arithmetic progression and note that $f(b)\ne 0$. Consider the equation
$$
s^2 = f(at^2+b).
$$
The hexic polynomial $f(at^2+b)$ has no repeated roots because $f$ is separable, $a\ne 0$, and $f(b)\ne 0$. So, the previous equation defines a hyperelliptic curve $X$ of genus $2$. This curve comes with some rational points: at least those with $t$ coordinates
$$
t= -n,...,-1, 0, 1, 2, ..., n
$$
where $n=\lfloor\sqrt{N-1}\rfloor$. This produces at least $2n+1$ different rational points in $X$, that is,
$$
2n+1 \le \# X(\Q).
$$

The substitution 
$$
y=s, \quad x=at^2+b
$$
defines a non-constant map $\pi: X\to E$.

The Jacobian $J$ of $X$ is an abelian surface because $X$ has genus $2$, and the above map exhibits $E$ as an isogeny factor of $J$ over $\Q$. Therefore $J$ splits as $J\sim E\times E'$ up to isogeny over $\Q$ where $E'$ is another elliptic curve (which in general is not isogenous to $E$). 

If the ranks of elliptic curves over $\Q$ are bounded by a constant $R$, then 
$$
\rk J(\Q) = \rk E(\Q) + \rk E'(\Q) \le 2R.
$$

The Dimitrov--Gao--Habegger theorem (Theorem \ref{ThmDGH}) provides an absolute constant $c>1$ (in particular, independent of $E$ and $X$) such that
$$
\# X(\Q) \le c^{1+\rk J(\Q)} \le c^{1+2R}.
$$
This gives $2\lfloor \sqrt{M-4} \rfloor +1 =2n+1 \le c^{1+2R}$, which shows that $M$ is uniformly bounded (assuming the existence of the uniform rank bound $R$).
\end{proof}

%%%%%%%%%%%%%%%%%%%%%%%%%%%%%%%%%%%%%%
%%%%%%%%%%%%%%%%%%%%%%%%%%%%%%%%%%%%%%
%%%%%%%%%%%%%%%%%%%%%%%%%%%%%%%%%%%%%%
%%%%%%%%%%%%%%%%%%%%%%%%%%%%%%%%%%%%%%
%%%%%%%%%%%%%%%%%%%%%%%%%%%%%%%%%%%%%%
%%%%%%%%%%%%%%%%%%%%%%%%%%%%%%%%%%%%%%

\section{Finitely generated multiplicative groups} \label{SecMult}

\begin{proof}[Proof of Theorem \ref{ThmMult}] This is very similar to the argument in the previous section. We assume that the ranks of elliptic curves over $\Q$ are bounded by some $R$.

Let $E$ and $f(x)$ be as in the statement. For every $a\in\Q^\times$ the equation $y^2=f(at^2)$ defines a genus $2$ hyperelliptic curve $X_a$ over $\Q$ because $f$ is separable and $f(0)\ne 0$. Its Jacobian $J_a$ splits over $\Q$ as $J_a\sim E\times E_a$ for some elliptic curve $E_a$ depending on the choice of $a$. This splitting comes from the map $X_a\to E$ defined by $(t,y)\mapsto (at^2,y)$.

Theorem \ref{ThmDGH} gives an absolute constant $c>1$ (in particular, independent of $E$ and $a$) such that
$$
\# X_a(\Q) \le c^{1+\rk J_a(\Q)} \le c^{1+2R}.
$$

Let $a$ vary over a set of representatives of $\Gamma/\Gamma^2$; this is at most $2^{\rho+1}$ choices of $a\in \Q^\times$. Each $\gamma\in x(E(\Q))\cap \Gamma$ gives a rational point in some $X_a$; namely, choose $a$ as the representative of $[\gamma]\in\Gamma/\Gamma^2$. The result follows with $\kappa = 2c^{1+2R}$.
\end{proof}

%%%%%%%%%%%%%%%%%%%%%%%%%%%%%%%%%%%%%%
%%%%%%%%%%%%%%%%%%%%%%%%%%%%%%%%%%%%%%
%%%%%%%%%%%%%%%%%%%%%%%%%%%%%%%%%%%%%%
%%%%%%%%%%%%%%%%%%%%%%%%%%%%%%%%%%%%%%
%%%%%%%%%%%%%%%%%%%%%%%%%%%%%%%%%%%%%%
%%%%%%%%%%%%%%%%%%%%%%%%%%%%%%%%%%%%%%
%%%%%%%%%%%%%%%%%%%%%%%%%%%%%%%%%%%%%%
%%%%%%%%%%%%%%%%%%%%%%%%%%%%%%%%%%%%%%
%%%%%%%%%%%%%%%%%%%%%%%%%%%%%%%%%%%%%%
%%%%%%%%%%%%%%%%%%%%%%%%%%%%%%%%%%%%%%
%%%%%%%%%%%%%%%%%%%%%%%%%%%%%%%%%%%%%%
%%%%%%%%%%%%%%%%%%%%%%%%%%%%%%%%%%%%%%

\section{Acknowledgments}

N.G.-F. was supported by ANID Fondecyt Regular grant 1251300 from Chile. H.P. was supported by ANID Fondecyt Regular grant 1230507 from Chile. We thank Yuri Bilu for a comment on a previous version that helped us to pay more attention to the split Jacobian point of view.

%%%%%%%%%%%%%%%%%%%%%%%%%%%%%%%%%%%%%%
%%%%%%%%%%%%%%%%%%%%%%%%%%%%%%%%%%%%%%
%%%%%%%%%%%%%%%%%%%%%%%%%%%%%%%%%%%%%%


\begin{thebibliography}{9}         

\bibitem{Bombieri} E. Bombieri, \emph{The Mordell conjecture revisited}. Annali della Scuola Normale Superiore di Pisa-Classe di Scienze, 1990, vol. 17, no 4, p. 615-640.

\bibitem{Bremner} A. Bremner, \emph{On arithmetic progressions on elliptic curves}. Experimental Mathematics, 1999, vol. 8, no 4, p. 409-413.

\bibitem{BremnerUlas} A. Bremner, M. Ulas, \emph{Rational points in geometric progressions on certain hyperelliptic curves}. Publ. Math. Debrecen 82.3--4 (2013): 669-683.

\bibitem{BST} A. Bremner, J. Silverman, N. Tzanakis, \emph{Integral points in arithmetic progression on $y^2=x(x^2-n^2)$}. Journal of Number Theory, (2000) 80(2), 187-208.

\bibitem{CHM} L. Caporaso, J. Harris, B. Mazur, \emph{Uniformity of rational points}. J. Amer. Math. Soc. 10 (1997), no. 1, 1-35.

\bibitem{CGF} J. Caro, N. Garcia-Fritz, \emph{Linear $x$-coordinate relations of triples on elliptic curves}. Journal of Number Theory 271 (2025): 109-121.

\bibitem{Choi} S. Choi, \emph{Additive rigidity for $x$-coordinates of rational points on elliptic curves}. Preprint (2026) (formerly: Elliptic curves and rational points in arithmetic progression, 2025).  arXiv:2510.03828

\bibitem{CissMoody} A. Ciss, D. Moody, \emph{Geometric progressions on elliptic curves}. Glasnik matematicki 52.1 (2017): 1-10.

\bibitem{DGH} V. Dimitrov, Z. Gao, P. Habegger, \emph{Uniformity in Mordell-Lang for curves}. Ann. of Math. (2) 194 (2021), no. 1, 237-298. 

\bibitem{Faltings1} G. Faltings, \emph{Endlichkeitss\"atze f\"ur abelsche Variet\"aten \"uber Zahlk\"orpern}. (German) [Finiteness theorems for abelian varieties over number fields] Invent. Math. 73 (1983), no. 3, 349-366.


\bibitem{Faltings2} G. Faltings, \emph{Diophantine approximation on abelian varieties}. Annals of Mathematics, 133 (1991), 549-576.

\bibitem{Faltings3} G. Faltings, \emph{The general case of S. Lang's conjecture}. Barsotti Symposium in Algebraic Geometry (Abano Terme, 1991). Perspect. Math. 15. Academic Press. San Diego. 1994. p. 175-182

\bibitem{GF} N. Garcia-Fritz, \emph{Quadratic sequences of powers and Mohanty's conjecture}. International Journal of Number Theory 14.02 (2018), 479-507.

\bibitem{GFP} N. Garcia-Fritz, H. Pasten, \emph{Elliptic curves with long arithmetic progressions have large rank}. Int. Math. Res. Not. IMRN 2021, no. 10, 7394-7432.

\bibitem{GFPpatterns} N. Garcia-Fritz, H. Pasten, \emph{Patterns on elliptic curves beyond Bremner's conjecture}. Preprint (2026) arXiv:2605.14962

\bibitem{GGK} Z. Gao, T. Ge, L. K\"uhne, \emph{The Uniform Mordell-Lang Conjecture}. (2021) to appear in Publ. math. IHES.

\bibitem{HMS} J. Harrison, A. Mudgal, H. Schmidt, \emph{Uniform sum-product phenomenon for algebraic groups and Bremner's conjecture}. Preprint (2026) arXiv:2603.06483

\bibitem{Mazur} B. Mazur, \emph{Abelian varieties and the Mordell-Lang conjecture}. In: Model Theory, Algebra, and Geometry, Math. Sci. Res. Inst. Publ. 39, Cambridge Univ. Press, Cambridge, 2000, pp. 199-227.

\bibitem{Mordell} L. Mordell, \emph{On the rational solutions of the indeterminate equations of the third and fourth degrees}. Proc. Cambridge Philos. Soc. 21 (1922/23), 179-192.

\bibitem{Mumford} D. Mumford, \emph{A remark on Mordell's conjecture}. 
Amer. J. Math. 87 (1965), 1007-1016. 

\bibitem{PPVW} J. Park, B. Poonen, J. Voight, M. Wood, \emph{A heuristic for boundedness of ranks of elliptic curves}. J. Eur. Math. Soc. (JEMS) 21 (2019), no. 9, 2859-2903. 

\bibitem{Remond1} G. R\'emond, \emph{D\'ecompte dans une conjecture de Lang}. Inventiones Mathematicae, (2000) 142 (3), 513-545.

\bibitem{Remond2} G. R\'emond, \emph{Sur les sous-vari\'et\'es des tores}. Compositio Mathematica 134.3 (2002) 337-366.

\bibitem{Vojta} P. Vojta, \emph{Siegel's theorem in the compact case}. Annals of Mathematics, 1991, p. 509-548.

\bibitem{YYZ} J. Yu, X. Yuan, S. Zhou, \emph{Quantitativity on the number of rational points in the Mordell conjecture}. Preprint (2026) arXiv:2602.01820

\end{thebibliography}
\end{document}